\newtheorem{thm}{Theorem}[section]
\newtheorem{lem}{Lemma}[section]
\newtheorem{pro}{Proposition}[section]
\newtheorem{cor}{Corollary}[section]
\newtheorem{rem}{Remark}[section]
\newcommand{\B}{\mathcal{B}}
\newcommand{\C}{\mathbb{C}}
\begin{document}
\begin{frontmatter}
\title{ Product  commuting maps  with the  $\lambda$-Aluthge transform }
\author{Fadil Chabbabi}
\ead{Fadil.Chabbabi@math.univ-lille1.fr}
\address{ Universit\'e  Lille1, UFR de Math\'ematiques,
\\ Laboratoire CNRS-UMR 8524 P. Painlev\'e, 
59655 Villeneuve d'Ascq  Cedex, France}


\begin{abstract}
Let $H$ and $K$ be two Hilbert spaces and $\B(H)$ be the algebra of all bounded linear operators from $H$ into itself. The main purpose of this paper is to obtain a characterization of bijective  maps $\Phi  :\B(H) \to \B(K)$   satisfying the following condition   
\begin{equation*}
\Delta_{\lambda}(\Phi (A)\Phi (B))=\Phi (\Delta_{\lambda}(A B)) \quad  for all \; \; A, B\in \B(H),
 \end{equation*}
 where   $\Delta_{\lambda}(T)$ stands the $\lambda$-Aluthge transform of the operator $T\in\B(H)$. 
 
  More precisely, we prove that a bijective map $\Phi $ satisfies  the above condition, if and only if $\Phi (A)=UAU^*$ for all $A\in\B(H)$, for some unitary operator $U:H\to K$.
\end{abstract}

\begin{keyword}
\texttt{Normal, Quasi-normal operators, Polar decomposition, $\lambda$-Aluthge transform.}
\end{keyword}

\end{frontmatter}


 \section{Introduction }
Let $H$ and $K$ be  two complex Hilbert spaces  and  $\mathcal{B}(H, K)$ be the Banach space of all bounded linear operators from $H$ into $K$. In the case $K=H$, $\mathcal{B}(H,H)$  is simply denoted by  $\B(H) $ which is a  Banach algebra.   For $T\in \mathcal{B}(H,K)$,  we set $\mathcal{R}(T)$ and $\mathcal{N}(T)$  for the range and the null-space of $T$, respectively. We also denote by  $T^* \in \mathcal{B}(K,H)$  the adjoint operator of $T$.

The spectrum of an operator $T\in\B(H)$ is denoted by $\sigma(T)$ and  $W(T)$  is the numerical range  of $T$.
 
 An operator $T\in \mathcal{B}(H,K)$ is  a {\it partial isometry} when $T^*T$ is an orthogonal projection (or, equivalently $TT^*T = T$). 
  In particular $T$ is an {\it isometry} if $T^*T=I$, and {\it unitary} if $T$ is a surjective isometry. 
  
The polar decomposition of $T\in\B(H)$ is given by  $T=V|T|$, where $|T|=\sqrt{T^*T}$ and $V$ is an appropriate partial isometry such that  $\mathcal{N}(T)=\mathcal{N}(V)$ and $\mathcal{N}(T^*)=\mathcal{N}(V^*)$. 
 
The Aluthge transform  introduced in \cite{MR1047771}  as $\Delta(T)=|T|^{\frac{1}{2}}V|T|^{\frac{1}{2}}$  to    extend some properties  of hyponormal  operators. 
Later, in \cite{MR1997382},    Okubo introduced a more general notion called  $\lambda-$Aluthge
transform which has also been studied in detail.

 For $\lambda \in [ 0, 1]$, the $\lambda-$Aluthge transform is  defined by, 
 $$
 \Delta_{\lambda}(T)=|T|^{\lambda}V|T|^{1-\lambda}.
 $$
  Notice  that $\Delta_0(T) =V|T|=T$, and $\Delta_1(T) = |T|V$ which is known as Duggal's
  transform.  It has since been studied in many different contexts and considered by a
  number of authors (see for instance,  \cite{ MR2013473, MR2148169, MR1780122, MR1829514, MR1971744,  MR1873788} and some
  of the references there).  The interest of the Aluthge transform lies in the fact that
   it respects many   properties of the original operator. For example, 
\begin{equation}
   \sigma_*(\Delta_{\lambda}(T)) = \sigma_*(T),   \mbox{ for every } \; \; T \in \B(H), 
\end{equation} 
  
where $  \sigma_*$ runs over a large family of spectra. See \cite[Theorems 1.3, 1.5]{MR1780122}.

 Another important property is that $Lat(T)$, the lattice of $T$-invariant subspaces of
 $H$, is nontrivial if and only if $Lat(\Delta(T))$ is nontrivial
(see \cite[Theorem 1.15]{MR1780122}). 

  Recently in \cite{MR3455750}, F.Bothelho, L.Moln\'ar and G.Nagy   studied  the   linear  bijective mapping on Von Neumann algebras which commutes with the $\lambda$-Aluthge transforms. They focus of bijective linear maps  such that $$\Delta_{\lambda}(\Phi (T))=\Phi (\Delta_{\lambda}(T )) \mbox{ for   every }  T \in \B(H). $$

  We are concerned in this paper with the more general problem of   product  commuting maps  with the  $\lambda$-Aluthge transform  in the following sense,
  \begin{equation}\label{c1}
\Delta_{\lambda}(\Phi (A)\Phi (B))=\Phi (\Delta_{\lambda}(A B))   \mbox{ for   every }   A, B\in \B(H),
 \end{equation}
for some fixed $\lambda \in]0,1[$. \\

Our main result gives a complete description of the bijective map $\Phi :\B(H)\to\B(K)$ which satisfies Condition (\ref{c1}) and is stated as follows.  
 \begin{thm}\label{th1} 
 Let $H$ and $K$ be a  complex Hilbert spaces, with $H$  of dimension greater than $2$.  Let    $\Phi  :\B(H)\to\B(K)$ be  bijective.  
 Then,

 $\Phi $ satisfies  $\left(\ref{c1}\right)$, if and only if, there exists  an unitary operator $U:H\to K$ such that  
 $$\Phi (A)=UAU^*\quad \text{ for all} \; A\in\B(H).$$
 \end{thm}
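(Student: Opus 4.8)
\noindent\emph{Sketch of the argument.}
The plan is to push the functional equation $(\ref{c1})$ down to rank-one operators, where the $\lambda$-Aluthge transform is completely explicit; throughout, $x\otimes y$ denotes the operator $z\mapsto\langle z,y\rangle\,x$, and one checks directly that $\Delta_{\lambda}(x\otimes y)=\frac{\langle x,y\rangle}{\|y\|^{2}}\,y\otimes y$.

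\emph{Step 1 (normalisations).} With $A=\Phi^{-1}(0)$, $(\ref{c1})$ gives $\Delta_{\lambda}(\Phi^{-1}(0)\,B)=\Phi^{-1}(0)$ for all $B$; taking $B=0$ yields $\Phi(0)=0$. If $E=\Phi^{-1}(I)$, then $\Delta_{\lambda}(\Phi(E)\Phi(B))=\Delta_{\lambda}(\Phi(B))=\Delta_{\lambda}(\Phi(B)\Phi(E))$ forces $\Delta_{\lambda}(EB)=\Delta_{\lambda}(BE)$ for all $B$; evaluating this on $B=x\otimes y$ and using the formula above shows every vector is an eigenvector of $E^{*}$, so $E$ is a scalar, and then $A=B=E$ in $(\ref{c1})$ together with injectivity forces $E=I$, i.e. $\Phi(I)=I$. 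Taking $B=I$ now gives the intertwining $\Delta_{\lambda}\circ\Phi=\Phi\circ\Delta_{\lambda}$. Since the fixed points of $\Delta_{\lambda}$ are exactly the quasinormal operators, $\Phi$ maps quasinormal operators bijectively onto quasinormal operators; as a quasinormal idempotent is an orthogonal projection, both $\Phi$ and $\Phi^{-1}$ (which again satisfies $(\ref{c1})$) send projections to projections, so $\Phi$ restricts to a bijection of the projection lattices.

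\emph{Step 2 (geometric step).} Using the standard fact that $\Delta_{\lambda}(T)=0\iff T^{2}=0$, and that for projections $P,Q$ one has $(PQ)^{2}=0\Rightarrow QPQ=0\Rightarrow PQ=0$ (positivity of $QPQ$), equation $(\ref{c1})$ gives, for projections,
\begin{equation*}
P\perp Q\iff\Delta_{\lambda}(PQ)=0\iff\Delta_{\lambda}(\Phi(P)\Phi(Q))=0\iff\Phi(P)\perp\Phi(Q).
\end{equation*}
Thus $\Phi$ induces a bijection of the projection lattices preserving orthogonality in both directions; since $\dim H\ge 3$, Uhlhorn's form of Wigner's theorem (equivalently, the fundamental theorem of projective geometry) yields a unitary or antiunitary $U:H\to K$ with $\Phi(P)=UPU^{*}$ for every projection $P$. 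Replacing $\Phi$ by $\Psi:=U^{*}\Phi(\cdot)U$ — which is bijective, fixes every projection, and again satisfies $(\ref{c1})$ because $\Delta_{\lambda}(U^{*}SU)=U^{*}\Delta_{\lambda}(S)U$ for a (anti)unitary $U$ — it remains to prove $\Psi=\mathrm{id}$, for then $\Phi(A)=UAU^{*}$ (and conversely any such $\Phi$ clearly solves $(\ref{c1})$).

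\emph{Step 3 (from projections back to all operators).} Arguing as in Step 1, $(\ref{c1})$ with $A=cI$ forces $\Delta_{\lambda}(\Psi(cI)\,C)=\Delta_{\lambda}(C\,\Psi(cI))$ for all $C$, so $\Psi(cI)=\gamma(c)I$ for a scalar function $\gamma$, and $A=cI,B=dI$ makes $\gamma$ multiplicative with $\gamma(1)=1$; also $\gamma$ is injective. Feeding $B=p\otimes p$ (a rank-one projection, fixed by $\Psi$) into $(\ref{c1})$ gives, for every $A$ and every unit vector $p$,
\begin{equation*}
\langle\Psi(A)p,p\rangle=\gamma\big(\langle Ap,p\rangle\big),\qquad\text{whence}\qquad\Psi(cP)=\gamma(c)P\ \ \text{for every rank-one projection }P.
\end{equation*}
Testing the first identity on the block operators $A=\alpha(p\otimes p)+\delta(q\otimes q)$ with $p\perp q$, and on the unit vectors $\tfrac{p\pm q}{\sqrt2}$ and $\tfrac{p\pm iq}{\sqrt2}$, shows both that $\langle\Psi(A)p,q\rangle=\langle\Psi(A)q,p\rangle=0$ and that $\gamma$ is additive; combined with $\gamma(|\langle p,q\rangle|^{2})=|\langle p,q\rangle|^{2}$ (read off from $(\ref{c1})$ with $A,B$ rank-one projections), this makes $\gamma$ a field endomorphism of $\C$ fixing $\R$, so $\gamma=\mathrm{id}$ or $\gamma$ is complex conjugation. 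The second possibility would give $\Psi(A)=A^{*}$, which via the identity $\Delta_{\lambda}(T^{*})=(\Delta_{1-\lambda}(T))^{*}$ would force $\Delta_{1-\lambda}(BA)=\Delta_{\lambda}(AB)$ for all $A,B$ — already false for suitable rank-one $A,B$. Hence $\gamma=\mathrm{id}$, so $\langle\Psi(A)p,p\rangle=\langle Ap,p\rangle$ for all unit $p$, and polarisation gives $\Psi=\mathrm{id}$. Therefore $\Phi(A)=UAU^{*}$ for all $A$.

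\emph{Main obstacle.} I expect the principal difficulty to be Step 2 — arranging matters so that $\Phi$ restricts to a genuine orthoisomorphism of the projection lattices, which is precisely where the hypothesis $\dim H>2$ enters through the Wigner--Uhlhorn theorem — together with the bookkeeping in Step 3 that upgrades the a priori only multiplicative scalar map $\gamma$ to an additive one. The remaining manipulations are routine applications of the explicit rank-one formula for $\Delta_{\lambda}$.
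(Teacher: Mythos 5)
Your route is genuinely different from the paper's. The paper never invokes Wigner--Uhlhorn: it proves that the scalar symbol $h$ (your $\gamma$) is additive and bounded on bounded sets, deduces that $\Phi$ is linear or conjugate-linear, shows that $\Phi$ preserves invertibility, and then quotes a structure theorem for unital invertibility-preserving maps to obtain $\Phi(A)=VAV^{-1}$ or $\Phi(A)=VA^{*}V^{-1}$, finally checking that $V$ is unitary and excluding the second form. Your replacement of the continuity argument by the observation that $\gamma$ fixes $[0,1]$ (via $\gamma(|\langle p,q\rangle|^{2})=|\langle p,q\rangle|^{2}$, hence fixes $\R$ by multiplicativity) is a genuine simplification, and Steps~1 and~3 of your sketch are essentially sound. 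One local error: the identity $\Delta_{\lambda}(T^{*})=(\Delta_{1-\lambda}(T))^{*}$ that you use to dispose of $\Psi(A)=A^{*}$ is false --- for $T=x\otimes y$ every $\Delta_{\mu}(T)$ is a multiple of $y\otimes y$, whereas $\Delta_{\lambda}(T^{*})$ is a multiple of $x\otimes x$. The exclusion itself is easy and can be done directly, as the paper does: taking $B=I$ gives $\Delta_{\lambda}(A^{*})=(\Delta_{\lambda}(A))^{*}$, which fails for a non-normal rank-one $A$.

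The genuine gap is at the junction of Steps~2 and~3. Uhlhorn's theorem gives a unitary \emph{or antiunitary} $U$; your argument then shows $\Psi=U^{*}\Phi(\cdot)U=\mathrm{id}$ and you conclude $\Phi(A)=UAU^{*}$ without ever eliminating the antiunitary branch. That branch does not produce a map of the form demanded by the theorem: conjugation by an antiunitary is conjugate-linear in $A$, while conjugation by a unitary is linear. Moreover, it cannot be eliminated. If $C$ is a conjugation on $H$ (an antiunitary involution), then $X\mapsto CXC$ is a conjugate-linear, multiplicative, positivity-preserving $*$-bijection of $\B(H)$; it carries the canonical polar decomposition $T=V|T|$ to $CTC=(CVC)(C|T|C)$ with $C|T|^{\mu}C=|CTC|^{\mu}$, whence $\Delta_{\lambda}(CTC)=C\Delta_{\lambda}(T)C$. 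Consequently $\Phi(A)=CAC$ is a bijection satisfying $(\ref{c1})$ which is not of the form $UAU^{*}$ with $U$ unitary (it sends $iI$ to $-iI$). So your argument, carried through correctly, proves ``$U$ unitary or antiunitary'', and no argument can sharpen this to ``unitary''. (The same difficulty is latent in the paper's own Step~5: the two-form dichotomy $VAV^{-1}$ versus $VA^{*}V^{-1}$ quoted there for invertibility preservers does not cover the conjugate-linear automorphism $A\mapsto V\bar{A}V^{-1}$, which is exactly the surviving antiunitary case.) To match the statement as written you would have to rule out the antiunitary possibility, and the example above shows this is not possible; the correct conclusion of your method is the theorem with ``unitary'' replaced by ``unitary or antiunitary''.
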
\

\begin{rem}
$(1)$ In one dimensional, the result of Theorem \ref{th1} fails, as given in the following example: let the map $\Phi: \C\to\C$ defined by  $${\Phi(z)=\left\{\begin{array}{l}\frac{1}{z}\text{ if } z\ne 0,\\ 0 \text{ if } z=0. \end{array}\right.}$$
Clearly $\Phi$ is bijective  and satisfies $\left(\ref{c1}\right)$, but it is not additive.

$(2)$ The  map  $\Phi $   considered in our theorem is not assumed to satisfy any kind of continuity. However, an automatic continuity is obtained as a consequence.\\
\end{rem}
 The proof of Theorem \ref{th1} is stated in next section. Several auxiliary results are needed for the proof and are established below.

\section{ Proof of the main theorem} 

We first recall some basic notions that are used in the sequel. An operator $T\in\B(H)$  is  normal if $T^*T=TT^*$,  and  is  quasi-normal, if  it commutes with $T^*T$ ( i.e. $TT^*T=T^*T^2$), or equivalently  $|T|$ and $V$ commutes. In finite dimensional spaces  every quasi-normal operator  is normal.  It is easy to see that if $T$ is quasi-normal, then $T^2$ is also quasi-normal, but the converse is false as shown by nonzero  nilpotent operators .

Also, quasi-normal operators  are exactly the fixed points of  $\Delta_{\lambda}$ (see \cite[Proposition 1.10]{MR1780122}).
   \begin{equation}\label{qn}
 T \; \; \mbox{ quasi-normal} \; \;  \iff  \Delta_{\lambda}(T) = T.
 \end{equation}
 An  idempotent self adjoint  operator $P\in\B(H)$  is said to be an orthogonal projection.  Clearly  quasi-normal idempotents  are orthogonal projections. \\
Two  projections $P,Q\in\B(H)$ are said to be orthogonal if $PQ=QP=0$ and we denote $P\perp Q$. 
  A partial ordering between orthogonal projections is defined as follows,  $$P\leq Q\:  \mbox{  if  } \:  PQ = QP = P.$$
We start with  the following lemma, which  gives the "only if" part in our theorem. It has already been mentioned in other papers in the case $H=K$ (see \cite{MR2148169}, for example). We give the proof for completeness.

\begin{lem}\label{l0} Let $U:H\to K$ be an unitary operator, and $\lambda\in [0,1]$. We have the following identity 
 $${ \Delta_{\lambda}}(UTU^*)=U{ \Delta_{\lambda}}(T)U^*, \quad \mbox{for every} \; T\in\B(H).$$
\end{lem}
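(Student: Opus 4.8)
The plan is to verify the identity directly from the definition of the $\lambda$-Aluthge transform, using the fact that conjugation by a unitary commutes with the continuous functional calculus and is compatible with polar decomposition. First I would write the polar decomposition $T = V|T|$ with $V$ a partial isometry satisfying $\mathcal{N}(T)=\mathcal{N}(V)$ and $\mathcal{N}(T^*)=\mathcal{N}(V^*)$. The key observation is that $(UTU^*)^*(UTU^*) = U T^*T U^*$, so by uniqueness of the positive square root and the intertwining property of the functional calculus, $|UTU^*| = U|T|U^*$. Consequently $UTU^* = (UVU^*)(U|T|U^*)$, and one checks that $UVU^*$ is a partial isometry with the correct kernel conditions $\mathcal{N}(UTU^*) = U\,\mathcal{N}(T)$ and $\mathcal{N}((UTU^*)^*) = U\,\mathcal{N}(T^*)$; by the uniqueness of the partial isometry in the polar decomposition, $UVU^*$ is exactly the partial isometry associated with $UTU^*$.

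Next I would apply the functional calculus identity to the power functions $t\mapsto t^{\lambda}$ and $t\mapsto t^{1-\lambda}$ (well defined on the spectrum of the positive operator $|T|\ge 0$), giving $|UTU^*|^{\lambda} = U|T|^{\lambda}U^*$ and $|UTU^*|^{1-\lambda} = U|T|^{1-\lambda}U^*$. Substituting these into the definition,
\[
\Delta_{\lambda}(UTU^*) = |UTU^*|^{\lambda}\,(UVU^*)\,|UTU^*|^{1-\lambda} = (U|T|^{\lambda}U^*)(UVU^*)(U|T|^{1-\lambda}U^*),
\]
and the inner factors $U^*U = I_H$ telescope, leaving $U\,|T|^{\lambda}V|T|^{1-\lambda}\,U^* = U\,\Delta_{\lambda}(T)\,U^*$, which is the claimed identity.

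There is essentially no serious obstacle here; the only point requiring a little care is the uniqueness clause in the polar decomposition — one must confirm that $UVU^*$ genuinely satisfies the kernel normalization $\mathcal{N}(UVU^*) = \mathcal{N}(UTU^*)$ and $\mathcal{N}((UVU^*)^*) = \mathcal{N}((UTU^*)^*)$ so that it is the partial isometry the definition of $\Delta_{\lambda}(UTU^*)$ refers to, rather than merely \emph{a} partial isometry factor. Since $U$ is unitary, it maps kernels bijectively onto kernels and $(UVU^*)^* = UV^*U^*$, so these equalities follow immediately from the corresponding properties of $V$. Everything else is a routine telescoping computation with $U^*U = I_H$ and $UU^* = I_K$.
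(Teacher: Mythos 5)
Your proof is correct and follows essentially the same route as the paper: identify $|UTU^*| = U|T|U^*$, check that $UVU^*$ is the partial isometry of the polar decomposition of $UTU^*$ via the kernel conditions, and telescope the $U^*U$ factors. The extra care you take in verifying the kernel normalization is exactly the point the paper also notes, so there is nothing to add.
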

\begin{proof}
Let $T\in\B(H)$.  It is easy to check
$$ \vert UTU^*\vert = U\vert T\vert U^* \quad \mbox{and} \quad  \vert UTU^*\vert^{\lambda} = U\vert T\vert^{\lambda}U^*, \; \; \lambda\in [0,1].$$
Now, let $T=V\vert T\vert$ be a polar decomposition. Then
$$UTU^*=UV\vert T\vert U^*=(UVU^*)(U\vert T\vert U^*)=\tilde{V}\vert UTU^*\vert,$$
where $\tilde{V}=UVU^*$. $\tilde{V}$ is a partial isometry,  $\mathcal{N}(UTU^*) = \mathcal{N}(\tilde{V})$  and hence $\tilde{V}\vert UTU^*\vert$ is the polar decomposition of $UTU^*$. This implies that :
\begin{align*}
{ \Delta_{\lambda}}(UTU^*) &=  |UTU^*|^{\lambda}\tilde{V}|UTU^*|^{1-\lambda}  \\
 &= U|T|^{\lambda}U^* \tilde{V}U|T|^{1-\lambda}U^*\\
   &= U|T|^{\lambda}V|T|^{1-\lambda}U^*\\
   & = U{ \Delta_{\lambda}}(T)U^*.
\end{align*}
This completes the proof.
\end{proof}  

 For $x,y \in H$ , we denote by  $x\otimes y$ the  at most rank one operator defined by $$(x\otimes y)u=<u,y>x \: \mbox{   for } \:  u\in H.$$ It is easy to show that every rank one   operator has  the previous form and that   $x\otimes y$ is an orthogonal projection, if and only if $x=y$ and $\|x\|=1$. We have the following proposition,
 \begin{pro}\label{p1} Let $x,y \in H$ be nonzero vectors. We have 
 $$\Delta_{\lambda}(x\otimes y)=\dfrac{<x,y>}{\|y\|^2}(y\otimes y)\: \:  \mbox{ for every }  \: \lambda \in ]0,1[
.$$
\end{pro}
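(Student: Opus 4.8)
The plan is to compute the polar decomposition of the rank-one operator $x\otimes y$ explicitly, then apply the definition of $\Delta_\lambda$. First I would observe that $(x\otimes y)^*(x\otimes y) = \|x\|^2\, (y\otimes y)$, so that $|x\otimes y| = \sqrt{\|x\|^2}\sqrt{y\otimes y}$; since $y\otimes y$ is $\|y\|^2$ times the orthogonal projection onto $\mathrm{span}(y)$, one gets $|x\otimes y| = \dfrac{\|x\|}{\|y\|}\, (y\otimes y)$. More generally, because $(y\otimes y)^n = \|y\|^{2(n-1)}(y\otimes y)$, any power behaves like $|x\otimes y|^{t} = \dfrac{\|x\|^{t}}{\|y\|^{2t}}\,\|y\|^{2t-2}\cdot\|y\|^{2}\cdots$; more cleanly, $|x\otimes y|^{t} = \left(\dfrac{\|x\|}{\|y\|^2}\right)^{t}\|y\|^{2t-2}(y\otimes y)$ for $t>0$, which I would verify by checking it on $\mathrm{span}(y)$ and noting both sides vanish on $y^\perp$. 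It is convenient to introduce the unit vectors $e = x/\|x\|$, $f = y/\|y\|$, so that $x\otimes y = \|x\|\|y\|\,(e\otimes f)$ and $|e\otimes f| = f\otimes f$, $|e\otimes f|^{t} = f\otimes f$ for every $t>0$.

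Next I would identify the partial isometry $V$ in the polar decomposition $x\otimes y = V|x\otimes y|$. Writing things in terms of the unit vectors, one needs $V(f\otimes f) = e\otimes f$ with $\mathcal{N}(V) = \mathcal{N}(e\otimes f) = f^\perp$; the natural choice is $V = e\otimes f$ itself, since $(e\otimes f)(f\otimes f) = \langle f,f\rangle\, e\otimes f = e\otimes f$ and $e\otimes f$ is a partial isometry with the correct kernel. So the polar decomposition of $e\otimes f$ is $e\otimes f = (e\otimes f)(f\otimes f)$, i.e. $V = e\otimes f$ and $|e\otimes f| = f\otimes f$. (For the original operator $x\otimes y = V|x\otimes y|$ one then has $V = e\otimes f$ and $|x\otimes y| = \dfrac{\|x\|}{\|y\|}(y\otimes y) = \|x\|\|y\|\,(f\otimes f)$, consistently.)

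Then the computation of the Aluthge transform is a short substitution. Using $|x\otimes y|^{\lambda}$ and $|x\otimes y|^{1-\lambda}$ are both scalar multiples of $f\otimes f$ (indeed of $y\otimes y$), and that $V = e\otimes f$, I would compute
\begin{align*}
\Delta_\lambda(x\otimes y) &= |x\otimes y|^{\lambda}\, V\, |x\otimes y|^{1-\lambda}\\
&= c_\lambda (f\otimes f)\,(e\otimes f)\, c_{1-\lambda}(f\otimes f),
\end{align*}
where $c_\lambda c_{1-\lambda}$ collects the scalar factors (these multiply out to $\|x\|\|y\|$ in the $f$-normalization). Using the multiplication rule $(a\otimes b)(c\otimes d) = \langle c,b\rangle\, (a\otimes d)$ twice gives $(f\otimes f)(e\otimes f)(f\otimes f) = \langle e,f\rangle\,(f\otimes f)$, hence $\Delta_\lambda(x\otimes y) = \|x\|\|y\|\langle e,f\rangle\,(f\otimes f) = \langle x,y\rangle\,(f\otimes f) = \dfrac{\langle x,y\rangle}{\|y\|^2}(y\otimes y)$, as claimed.

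There is no serious obstacle here; the only points requiring a little care are the formula for fractional powers of the positive rank-one operator $y\otimes y$ (best handled via the spectral picture: it is diagonalizable with eigenvalue $\|y\|^2$ on $\mathrm{span}(y)$ and $0$ on $y^\perp$, so $(y\otimes y)^{t} = \|y\|^{2(t-1)}(y\otimes y)$ for $t>0$), and checking that $e\otimes f$ really is the partial isometry with the kernel condition $\mathcal{N}(V) = \mathcal{N}(x\otimes y)$ demanded by the polar decomposition convention. I would also note explicitly why $\lambda \in\,]0,1[$ matters: both exponents $\lambda$ and $1-\lambda$ must be strictly positive so that $|x\otimes y|^{\lambda}$ and $|x\otimes y|^{1-\lambda}$ are genuinely the fractional powers computed above rather than the identity.
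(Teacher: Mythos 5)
Your argument is correct and follows essentially the same route as the paper: compute $|x\otimes y| = \frac{\|x\|}{\|y\|}(y\otimes y)$, observe that all its positive powers are scalar multiples of $y\otimes y$, and substitute into the definition of $\Delta_\lambda$ (the paper merely avoids naming the partial isometry, collapsing $|T|^{\lambda}U|T|^{1-\lambda}$ to $\frac{1}{\|x\|\|y\|}|T|\,T$ via $U|T|=T$, whereas you exhibit $V=e\otimes f$ explicitly and check the kernel condition --- both are fine). One small slip that does not propagate: your displayed formula $|x\otimes y|^{t}=\bigl(\|x\|/\|y\|^{2}\bigr)^{t}\|y\|^{2t-2}(y\otimes y)$ should read $\bigl(\|x\|/\|y\|\bigr)^{t}\|y\|^{2t-2}(y\otimes y)=\|x\|^{t}\|y\|^{t-2}(y\otimes y)$; your final computation is unaffected because it correctly uses $|x\otimes y|^{\lambda}=(\|x\|\|y\|)^{\lambda}(f\otimes f)$.
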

\begin{proof}
Denote $T=x\otimes y$, then  $T^*T=|T|^2=\|x\|^2(y\otimes y)=\big(\frac{\|x\|}{\|y\|}(y\otimes y)\big)^2\;\;\text{and}\;\;|T|=\sqrt{T^*T}=\frac{\|x\|}{\|y\|}(y\otimes y).$
It follows that   $|T|^2=\|x\|\|y\| |T|\;\; \text{and}\;\; |T|^\gamma =(\|x\|\|y\|)^{\gamma-1}|T|$ for every  $\gamma>0$.

Now,  let  $T=U|T|$ be the polar decomposition of $T$, we have 
\begin{align*}
\Delta_{\lambda}(T)&=|T|^{\lambda}U|T|^{1-\lambda}\\&=(\|x\|\|y\|)^{{\lambda-1}}(\|x\|\|y\|)^{{-\lambda}}|T|U|T|\\&=\dfrac{1}{\|x\|\|y\|}|T|T\\&=\frac{1}{\|y\|^2}(y\otimes y)\circ (x\otimes y)=\dfrac{<x,y>}{\|y\|^2}(y\otimes y).
\end{align*}
\end{proof}

We deduce the next 

\begin{cor}\label{rt}Let $R$ be a bounded linear operator on $H$ and $\lambda \in ]0,1[$. Suppose that $$\Delta_{\lambda}(RT)= \Delta_{\lambda}(TR), $$ for every rank one operator of  the form $T=y\otimes y$. Then, there exists some $\alpha \in \C$ such that $R =\alpha I$.
\end{cor}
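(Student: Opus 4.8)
The plan is to exploit Proposition 1.1 to convert the hypothesis into a concrete family of rank-one identities and then extract enough information about $R$ to conclude it is a scalar. First I would fix a unit vector $y\in H$ and set $P=y\otimes y$, an orthogonal projection. Then $RP=(Ry)\otimes y$ and $PR=y\otimes(R^*y)$, so by Proposition 1.1 the hypothesis $\Delta_\lambda(RP)=\Delta_\lambda(PR)$ becomes
\begin{equation*}
\langle Ry,y\rangle\,(y\otimes y)=\frac{\langle y,R^*y\rangle}{\|R^*y\|^2}\,(R^*y\otimes R^*y),
\end{equation*}
valid whenever $Ry\neq 0$ and $R^*y\neq 0$ (the degenerate cases are handled separately and force the corresponding coefficient to vanish as well). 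Note $\langle y,R^*y\rangle=\langle Ry,y\rangle$, so the two sides carry the same scalar factor $\langle Ry,y\rangle$.

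The key step is to read off what this forces. If $\langle Ry,y\rangle\neq 0$, then the right-hand side is a nonzero multiple of the rank-one projection $\frac{1}{\|R^*y\|^2}(R^*y\otimes R^*y)$, while the left-hand side is a nonzero multiple of $y\otimes y$; since a nonzero rank-one operator determines its range line uniquely, we get that $R^*y$ is a scalar multiple of $y$, and comparing coefficients gives $R^*y=\overline{\langle Ry,y\rangle}\,y/\langle Ry,y\rangle \cdot(\text{something})$ — more cleanly, $R^*y=\mu_y y$ for some $\mu_y\in\C$. Thus every unit vector $y$ with $\langle Ry,y\rangle\neq 0$ is an eigenvector of $R^*$. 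The remaining case $\langle Ry,y\rangle=0$ would force, via the displayed identity, that $R^*y=0$, i.e. $y$ is again an eigenvector of $R^*$ (with eigenvalue $0$). Hence \emph{every} unit vector of $H$ is an eigenvector of $R^*$.

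Finally I would invoke the standard fact that an operator on a space of dimension $\geq 2$ all of whose vectors are eigenvectors must be a scalar multiple of the identity: if $R^*x=\mu_x x$ and $R^*z=\mu_z z$ with $x,z$ linearly independent, applying $R^*$ to $x+z$ gives $\mu_{x+z}(x+z)=\mu_x x+\mu_z z$, whence $\mu_x=\mu_z$. So $R^*=\bar\alpha I$ for some $\alpha\in\C$, and therefore $R=\alpha I$.

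The main obstacle I anticipate is the careful bookkeeping of the degenerate cases — when $Ry=0$ or $R^*y=0$, Proposition 1.1 does not apply directly to one of the products, so one must argue separately (e.g. $RP=0$ trivially has $\Delta_\lambda(RP)=0$, forcing $\Delta_\lambda(PR)=0$ and hence $\langle y,R^*y\rangle=0$, etc.) to still conclude that $y$ is an eigenvector of $R^*$. Everything else is a short linear-algebra argument; the dimension hypothesis enters only at the very last step.
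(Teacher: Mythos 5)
Your argument is sound at every unit vector $y$ with $\langle Ry,y\rangle\neq 0$: there the identity coming from Proposition \ref{p1} really does force $R^{*}y$ to be proportional to $y$. The gap is in the remaining case. If $\langle Ry,y\rangle=0$ (with $Ry\neq 0$ and $R^{*}y\neq 0$, so both Aluthge transforms are still computed by Proposition \ref{p1}), your displayed identity reads
\[
0\cdot(y\otimes y)=\frac{\langle y,R^{*}y\rangle}{\|R^{*}y\|^{2}}\,(R^{*}y\otimes R^{*}y),
\]
and since $\langle y,R^{*}y\rangle=\langle Ry,y\rangle=0$ the right-hand side vanishes automatically, whatever $R^{*}y$ is. The identity is therefore satisfied vacuously; it does \emph{not} force $R^{*}y=0$, only $R^{*}y\perp y$. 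Consequently your intermediate claim that \emph{every} unit vector is an eigenvector of $R^{*}$ is not established, and the final linear-algebra step has nothing to stand on.

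This is precisely the difficulty the paper's proof is organized around. Writing $A=R^{*}$, it classifies each $z$ as being of the ``first kind'' ($Az\perp z$) or of the ``second kind'' ($Az$ and $z$ linearly dependent) --- your pointwise analysis shows every $z$ is of at least one kind --- and then rules out the coexistence of a vector $x$ that is only of the first kind with a vector $y$ that is only of the second kind: along the segment $z_{t}=tx+(1-t)y$ one checks that $\langle Az_{t},z_{t}\rangle=0$ for at most one $t\in\,]0,1[$ and that $Az_{t},z_{t}$ are dependent for at most one such $t$, so for small $t>0$ the vector $z_{t}$ would be of neither kind, a contradiction. Hence all vectors are of the same kind, and either $\langle Az,z\rangle=0$ for every $z$ (so $A=0$, using that $H$ is a \emph{complex} Hilbert space) or every vector is an eigenvector of $A$ (so $A$ is scalar). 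To repair your proof you need some such global argument --- for instance a perturbation or density argument on the open set $\{y:\langle Ry,y\rangle\neq 0\}$ --- since the pointwise identity alone gives no information at the vectors where the quadratic form $z\mapsto\langle Rz,z\rangle$ vanishes.
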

\begin{proof} Denote $A=R^*$. First, we claim that the linear operator $A$ satisfies the property that for every $z\in H$ we either have $Az$ is orthogonal to $z$ (calling $z$ being of the first kind) or $Az,z$ are linearly dependent (calling $z$ being of the second kind). Indeed, let $z\in H$ and $T=z\otimes z$, from the assumption and the Proposition \ref{p1},   we have
$$<Rz,z>z\otimes z=\Delta_\lambda(Rz\otimes z)=\Delta_\lambda(z\otimes Az).$$
In the case when $<Rz,z>=0$, then $z$ is of the first kind. And if $<Rz,z>\ne 0$ then $Az\ne 0$, and from  the last equality it follows that 
$$<Rz,z>z\otimes z=\dfrac{<Rz,z>}{\|Az\|^2}Az\otimes Az.$$
Thus $Az$ and $z$ are linearly dependent.  

Now, $A$ is a scalar multiple of the identity. Indeed, on contrary assume that we have vector $x$ which is of the first kind but not of the second kind and that we have a vector $y$ which is of the second kind but not of the first kind. Then $x,y$ are linearly independent. We may assume that $Ay=y$. Set $x'=Ax$. For a real number $t$  from the unit interval and for $z_t=tx+(1-t)y$ we have $Az_t=tx'+(1-t)y$. It is clear that  the equation $<Az_t,z_t>=t(1-t)\big( <x',y>+<y,x>\big)+(1-t)^2\|y\|^2=0$ has at most one solutions $t_1\in ]0,1[$. Also, with the fact that $x,y$ are linearly independent, then  $Az_t,z_t$ are linearly independent for all $t\in]0,1[$ except for at most one $t\in ]0,1[$. So, for example,  for small enough positive $t$ the vector $z_t$ does not of the first kind nor of the second. 

 This shows that either have that $Az$ is orthogonal to $z$ for all vectors $z$ or we have $Az,z$ are linearly dependent for all vectors $z$. In the first case we have that $A=0$, in the second one $A$ is a scalar multiple of the identity. In any way $A$ is a scalar multiple of the identity. Thus $R=A^*=\alpha I$ for some $\alpha \in\C$.

\end{proof}

The following lemma,  provides  a  criterion for an operator  to be positive  through its $\lambda$-Aluthge transform. It will play a  crucial  role in the  proof of  Theorem \ref{th1}.

 \begin{lem}\label{l1}
  Let $T\in \B(H)$ be an invertible operator. The following conditions are equivalent :
  \begin{enumerate}[(i)]
  \item $T$ is positive;
  \item for every  $\lambda \in [0,1],  {\Delta_{\lambda}}(T)$  is  positive;
  \item there exists   $\lambda \in [0,1]$ such that ${\Delta_{\lambda}}(T)$ is positive. 
 \end{enumerate}
 In particular,  ${\Delta_{\lambda}}(T)=cI$ for some nonzero scalar $c$, if and only if
 $T=cI$. 
 \end{lem}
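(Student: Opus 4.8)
The plan is to prove the chain of implications $(i)\Rightarrow(ii)\Rightarrow(iii)\Rightarrow(i)$, of which the first two are essentially trivial, so the real content lies in $(iii)\Rightarrow(i)$. First I would dispose of $(i)\Rightarrow(ii)$: if $T$ is positive and invertible, then $T=|T|$, its polar decomposition has $V=I$, and $\Delta_\lambda(T)=|T|^\lambda I|T|^{1-\lambda}=|T|=T$ is positive for every $\lambda$. The implication $(ii)\Rightarrow(iii)$ needs nothing. So everything reduces to showing that if $T$ is invertible and $\Delta_\lambda(T)$ is positive for some fixed $\lambda\in[0,1]$, then $T$ is positive.

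For $(iii)\Rightarrow(i)$, fix the polar decomposition $T=V|T|$; since $T$ is invertible, $V$ is unitary and $|T|$ is positive and invertible. Write $P=|T|$, so $\Delta_\lambda(T)=P^\lambda V P^{1-\lambda}$. The hypothesis is that $S:=P^\lambda V P^{1-\lambda}$ is positive, hence self-adjoint: $P^\lambda V P^{1-\lambda}=P^{1-\lambda}V^* P^{\lambda}$. The key idea is to exploit that the $\lambda$-Aluthge transform preserves the spectrum (equation (1) in the excerpt, or more elementarily: $\Delta_\lambda(T)=P^\lambda V P^{1-\lambda}$ is similar to $P^{-(1-\lambda)}\big(P^\lambda V P^{1-\lambda}\big)P^{1-\lambda}=V P=\ldots$ — more directly, $P^{\lambda}VP^{1-\lambda}$ and $VP$ are similar via conjugation by $P^{1-\lambda}$ when $P$ is invertible, so $\sigma(\Delta_\lambda(T))=\sigma(VP)=\sigma(T)$). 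Since $S$ is positive and invertible, $\sigma(S)\subset(0,\infty)$, hence $\sigma(T)\subset(0,\infty)$ as well. Now I would pass to the self-adjointness relation: from $P^\lambda V P^{1-\lambda}=P^{1-\lambda}V^*P^\lambda$ one gets, after multiplying on the left by $P^{-\lambda}$ and on the right by $P^{-(1-\lambda)}$, the identity $V=P^{1-2\lambda}V^*P^{2\lambda-1}$, i.e. $P^{2\lambda-1}V=V^*P^{2\lambda-1}$, equivalently $P^{2\lambda-1}V$ is self-adjoint. (When $\lambda=1/2$ this already says $V=V^*$, so $V$ is a unitary self-adjoint, a symmetry; one then argues as below or notes $T=V|T|$ with $V=V^*$ and $\sigma(T)\subset(0,\infty)$ forces $V=I$.)

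The cleanest uniform route is spectral-theoretic. Set $W=P^{1-\lambda}$, which is positive invertible; then $S=P^{\lambda}VP^{1-\lambda}$ positive means $W^{-1}S W = W^{-1}P^{\lambda}VP^{1-\lambda}W = P^{-(1-\lambda)}P^{\lambda}VP^{1-\lambda}P^{1-\lambda}$ — rather, conjugating $S$ by $W$ gives an operator similar to $S$, and a short computation shows $W S W^{-1}=P^{1-\lambda}\cdot P^{\lambda}VP^{1-\lambda}\cdot P^{-(1-\lambda)}=PV$, the Duggal-type rearrangement. So $PV$ is similar to the positive operator $S$, hence $PV$ has positive spectrum; but $PV = V^*(VPV)V^{-1}\ldots$ one sees $PV$ and $VP=T$ are similar (conjugate by $V$), confirming $\sigma(T)\subset(0,\infty)$. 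To upgrade ``positive spectrum'' to ``positive operator'' I use the extra rigidity: $S$ is positive \emph{and} self-adjoint, and $S$ is similar to $T=VP$ via $S = P^{-(1-\lambda)}(P^{\lambda}VP^{1-\lambda})$ — precisely $T = VP$ and $S=P^{\lambda}VP^{1-\lambda}=(P^{-(1-\lambda)})T(P^{1-\lambda})$, so $T = P^{1-\lambda}S P^{-(1-\lambda)}$. Thus $T$ is similar, via the positive operator $Q:=P^{1-\lambda}$, to the positive operator $S$: $T=QSQ^{-1}$. Then $T^*=Q^{-1}SQ$, so $QTQ^{-1}=S=Q^{-1}T^*Q$, giving $Q^2 T = T^* Q^2$, i.e. $Q^2 T$ is self-adjoint. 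Hence $T = Q^{-2}(Q^2T)$ is the product of a positive operator $Q^{-2}$ and a self-adjoint operator $Q^2T$; such a product, when its spectrum lies in $(0,\infty)$, is positive — indeed $Q^{-1}(Q^2T)Q^{-1}=QTQ^{-1}=S$ shows $Q^2T$ is congruent to the positive operator $S$, so by Sylvester's law of inertia (or the bounded analogue: $X$ self-adjoint and $C^*XC$ positive definite with $C$ invertible implies $X$ positive definite) $Q^2T$ is positive, and therefore $S=Q^{-1}(Q^2T)Q^{-1}$ positive is automatic while $T=Q^{-1}SQ^{-1}\cdot$ — cleaner: $T=Q^{-1}SQ^{-1}\cdot Q^2\ldots$. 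The honest finish: $Q^2T$ positive and $Q^{-2}$ positive commute? They need not, but $Q^{-1}(Q^2 T)Q^{-1}=QTQ^{-1}$ is \emph{both} positive (it equals a conjugate of the self-adjoint $Q^2T$ by $Q^{-1}$, hence self-adjoint, and similar to $S$ hence spectrum in $(0,\infty)$, hence positive) and similar to $T$; but a self-adjoint operator similar to $T$ and with the same spectrum need not equal $T$ unless $T$ is already self-adjoint — so instead use: $QTQ^{-1}$ self-adjoint $\Rightarrow QTQ^{-1}=(QTQ^{-1})^*=Q^{-1}T^*Q\Rightarrow Q^2T=T^*Q^2=(Q^2 T)^*\Rightarrow Q^2 T$ self-adjoint, and iterating, $Q^{2n}T$ is self-adjoint for all $n$, hence (taking $n$-th roots via the functional calculus and a limiting argument, or directly since $P$ and $V$ are the building blocks) $PT=TP$ ...

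The main obstacle, and the step I would spend the most care on, is exactly this last upgrade from ``$\Delta_\lambda(T)$ self-adjoint with positive spectrum'' to ``$T$ positive.'' The reliable argument is: self-adjointness of $S=P^\lambda V P^{1-\lambda}$ gives $P^{2\lambda-1}V$ self-adjoint, i.e. $P^{2\lambda-1}V=V^*P^{2\lambda-1}$; combined with $V$ unitary this forces $V$ to commute with $P^{2\lambda-1}$ after multiplying the relation by its adjoint and using $V^*V=VV^*=I$ (one gets $P^{2\lambda-1}VV^*P^{2\lambda-1}=V^*P^{2(2\lambda-1)}V$, whence $V$ commutes with $P^{2(2\lambda-1)}$, hence with $P$ for $\lambda\ne 1/2$, and for $\lambda=1/2$ handle separately as noted). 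Once $V$ commutes with $P$, $T=VP$ is quasi-normal, $\Delta_\lambda(T)=T$ by (3), so $T=S$ is self-adjoint; being also invertible with $\sigma(T)\subset(0,\infty)$, $T$ is positive. Finally, the ``in particular'' claim: if $\Delta_\lambda(T)=cI$ with $c\ne0$, then $cI$ is invertible so $T$ is invertible (spectrum preservation) and $\Delta_\lambda(T)=cI$; writing $c=|c|e^{i\theta}$ and replacing $T$ by $e^{-i\theta}T$ (which scales $\Delta_\lambda$ by $e^{-i\theta}$) reduces to $\Delta_\lambda(e^{-i\theta}T)=|c|I>0$, so by $(iii)\Rightarrow(i)$ the operator $e^{-i\theta}T$ is positive, hence equal to its Aluthge transform $|c|I$, giving $T=cI$; conversely $\Delta_\lambda(cI)=cI$ trivially.
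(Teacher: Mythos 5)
Your final ``reliable argument'' is correct, but note that the bulk of the text preceding it (the similarity/Sylvester detours, the iteration of ``$Q^{2n}T$ self-adjoint'', the sentences trailing off in ellipses) consists of abandoned attempts that should simply be deleted; only the last paragraph constitutes a proof. That proof is close to, but not identical with, the paper's. Both arguments start from the same relation: writing $T=V|T|$ with $V$ unitary and $P=|T|$ invertible, self-adjointness of $\Delta_{\lambda}(T)=P^{\lambda}VP^{1-\lambda}$ yields $P^{2\lambda-1}V=V^{*}P^{2\lambda-1}$ (in the paper this appears as $AU=U^{*}A$ with $A=|T|^{2\lambda-1}$, obtained by the congruence $|T|^{\lambda-1}\Delta_{\lambda}(T)|T|^{\lambda-1}=AU$). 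From there the routes split. The paper uses the full \emph{positivity} of $AU$: from $UAU=A$ it gets $(AU)^{2}=A^{2}$, and uniqueness of the positive square root gives $AU=A$, hence $U=I$ and $T=|T|$; this works uniformly in $\lambda$, including $\lambda=1/2$. You use only \emph{self-adjointness} of $B=P^{2\lambda-1}V$: $BB^{*}=B^{*}B$ forces $V$ to commute with $P^{2(2\lambda-1)}$, hence with $P$ by functional calculus when $\lambda\neq 1/2$, so $T$ is quasi-normal and $T=\Delta_{\lambda}(T)$ is positive by \eqref{qn}; the price is a separate case $\lambda=1/2$, where your quick justification (``$V=V^{*}$ and $\sigma(T)\subset(0,\infty)$ forces $V=I$'') should be replaced by the cleaner observation that $V=P^{-1/2}\Delta_{1/2}(T)P^{-1/2}$ is a congruence of a positive operator, hence a positive unitary, hence $I$. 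Your treatment of the ``in particular'' clause via homogeneity of $\Delta_{\lambda}$ and a rotation $e^{-i\theta}T$ is correct and is in fact more explicit than the paper, which leaves that part unproved.
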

 \begin{proof} 
The implications $(i)\Rightarrow (ii)\Rightarrow (iii)$ are  trivial. It remains to show that  $(iii)\Rightarrow (i)$. Let us consider the polar decomposition  $T=U|T|$ of $T$ and assume that ${\Delta_{\lambda}}(T)$ is a positive operator. Since $T$ invertible it follows that $|T|^{1-\lambda}$ is invertible and $U$ is unitary. We claim  that $U = I$. Indeed, let us denote $A=|T|^{2\lambda-1}$, we have    
  \begin{align*}
  AU &=|T|^{2\lambda-1}U\\&=|T|^{\lambda-1}(|T|^{\lambda}U|T|^{1-\lambda})|T|^{\lambda-1}\\
  &=|T|^{\lambda-1}{\Delta_{\lambda}}(T)|T|^{\lambda-1}.
  \end{align*}
  This follows that  $AU=|T|^{\lambda-1}{\Delta_{\lambda}}(T)|T|^{\lambda-1}$ is positive. In particular it is self adjoint. Thus  $AU=(AU)^*=U^*A$ and then $UAU=A$.
  Therefore  $(AU)^2=A^2$. It follows that $AU=A$ since  $AU$ and $A$ are positive. Thus   $U=I$ and this gives $T=U|T|=|T|$ is positive. 
 \end{proof}
 
\begin{rem}  The assumption  $T$ is invertible is necessary in  the previous lemma.
 Indeed, let $ T=x\otimes y$, with $x, y$ be  nonzero independent  vectors such that $<x,y>\ge 0$. Using  proposition \ref{p1}, we get ${\Delta_{\lambda}}(T)$ is positive while $T$ is not.
\end{rem}

\begin{lem}\label{l2}
Let $T\in\B(H)$ be an arbitrary operator and $P\in\B(H)$ be an orthogonal projection. The following are equivalent :
\begin{enumerate}[(i)]
\item  $\Delta_{\lambda}(TP)=T$ ;
\item $TP=PT=T\:   \mbox{ and  }   T\: \mbox{ is quasi-normal}.$
\end{enumerate}
\end{lem}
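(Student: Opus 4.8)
The plan is to prove $(ii)\Rightarrow(i)$ first, since it is the routine direction. If $TP=PT=T$ and $T$ is quasi-normal, then $TP=T$ gives $\Delta_\lambda(TP)=\Delta_\lambda(T)$, and by the fixed-point characterization \eqref{qn} of quasi-normal operators, $\Delta_\lambda(T)=T$, so $\Delta_\lambda(TP)=T$. The substance of the lemma is the converse $(i)\Rightarrow(ii)$.

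For $(i)\Rightarrow(ii)$, write the polar decomposition $TP=V|TP|$. The first observation I would make is that $PT^*TP=|TP|^2$, so $|TP|$ has range contained in $\mathcal{R}(P)$, i.e.\ $P|TP|=|TP|P=|TP|$ and hence $P|TP|^\mu=|TP|^\mu P=|TP|^\mu$ for all $\mu\ge 0$ (using a continuous functional calculus / approximation of $t^\mu$ by polynomials without constant term on the spectrum). Also $\mathcal{N}(V)=\mathcal{N}(TP)\supseteq\mathcal{N}(P)=\mathcal{R}(P)^\perp$, so $V$ and $V^*$ live on $\mathcal{R}(P)$ as well; in particular $VP=V$. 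Then
\begin{align*}
T=\Delta_\lambda(TP)=|TP|^\lambda V|TP|^{1-\lambda}
=|TP|^\lambda V|TP|^{1-\lambda}P=TP,
\end{align*}
and similarly $P|TP|^\lambda=|TP|^\lambda$ gives $PT=P\,\Delta_\lambda(TP)=\Delta_\lambda(TP)=T$. So from $(i)$ alone we already get $TP=PT=T$; consequently $TP=T$, hence $\Delta_\lambda(T)=\Delta_\lambda(TP)=T$, and \eqref{qn} yields that $T$ is quasi-normal. This closes the argument.

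The step I expect to require the most care is the claim $P|TP|^\lambda=|TP|^\lambda P=|TP|^\lambda$ and the companion fact $VP=V$, $PV^*=V^*$; these hinge on the fact that the partial isometry $V$ in the polar decomposition has initial space $\overline{\mathcal{R}(|TP|)}\subseteq\mathcal{R}(P)$ and final space $\overline{\mathcal{R}(TP)}\subseteq\mathcal{R}(P)$, together with the spectral-functional-calculus remark that $f(|TP|)$ vanishes on $\mathcal{N}(|TP|)\supseteq\mathcal{R}(P)^\perp$ when $f(0)=0$. Once these commutation relations with $P$ are in hand, the manipulation of $\Delta_\lambda(TP)$ inserting and deleting factors of $P$ is purely formal. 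I would also note that invertibility of $T$ is not needed here, in contrast to Lemma \ref{l1}.
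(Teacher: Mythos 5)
Your proposal is correct and follows essentially the same route as the paper: both hinge on the observation that $|TP|^2=PT^*TP$ forces $\overline{\mathcal{R}(|TP|^\mu)}\subseteq\mathcal{R}(P)$, from which $PT=TP=T$ follows and quasi-normality is then read off from $\Delta_\lambda(T)=\Delta_\lambda(TP)=T$ via \eqref{qn}. (The paper phrases this as range inclusions $\mathcal{R}(T),\mathcal{R}(T^*)\subseteq\mathcal{R}(P)$ rather than your absorption identities $P|TP|^\mu=|TP|^\mu P=|TP|^\mu$; your side remark that the final space $\overline{\mathcal{R}(TP)}\subseteq\mathcal{R}(P)$ is not needed and not obvious a priori, but it plays no role in your actual computation.)
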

 \begin{proof} The implication $(ii)\Rightarrow (i)$  is obvious. We show the direct implication. 
Consider  $TP=U|TP|$ the polar decomposition of $TP$. Suppose that $\Delta_{\lambda}(TP)=T$,  then 
  \begin{equation}
  |TP|^{\lambda}U|TP|^{1-\lambda}=T \mbox{~~~ and ~~~} |TP|^{1-\lambda}U^*|TP|^{\lambda}=T^*.
  \end{equation} 
  It follows that $$\mathcal{R}(T)\subseteq \mathcal{R}(|TP|^{\lambda})\subseteq \overline{\mathcal{R}(|TP|^2)}$$
  and 
   $$\mathcal{R}(T^*)\subseteq \mathcal{R}(|TP|^{1-\lambda})\subseteq \overline{\mathcal{R}(|TP|^2)}.$$
  In the other hand, we have $|TP|^2=PT^*TP=P|T|^2P$. Thus  $\overline{\mathcal{R}(|TP|^2)}\subseteq \mathcal{R}(P)$. Hence $\mathcal{R}(T)\subset \mathcal{R}(P)$ and $\mathcal{R}(T^*)\subset \mathcal{R}(P)$. Which implies that $PT=T$ and $PT^*=T^*$. Therefore  $$PT=TP=T\;\;\; \text{and}\;\; T \: \mbox{  is quasi-normal}.$$
 \end{proof}

\begin{pro}\label{l5} Let $\Phi $ be a bijective  map satisfying $\left(\ref{c1}\right)$.  Then\\
$$\Phi (0)=0.$$
Moreover,  there exists a bijective function $h:\C\to\C$ such that:
\begin{enumerate}[(i)]
\item $\Phi (\alpha I)=h(\alpha)I$ for all $\alpha \in\C$.
\item $h(\alpha \beta)=h(\alpha)h(\beta)$ for all $\alpha,\beta \in \C$.
\item $h(1)=1$ and  $h(-\alpha)=-h(\alpha)$ for all $\alpha \in \C$.
\end{enumerate}
\end{pro}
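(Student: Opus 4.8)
The plan is to exploit Condition (\ref{c1}) applied to scalar operators and to rank-one operators, using the formulas for $\Delta_\lambda$ already established. First I would show $\Phi(0)=0$. Put $A=B=0$ in (\ref{c1}) to get $\Delta_\lambda(\Phi(0)^2)=\Phi(0)$, so $\Phi(0)$ is a fixed point of $\Delta_\lambda$ and hence quasi-normal by (\ref{qn}); more usefully, taking $B$ arbitrary and $A=0$ gives $\Delta_\lambda(\Phi(0)\Phi(B))=\Phi(0)$ for every $B$. Since $\Phi$ is surjective, $\Phi(0)\,S$ has the \emph{same} $\lambda$-Aluthge transform $\Phi(0)$ for every $S\in\B(K)$. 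Choosing $S$ appropriately (e.g. so that $\Phi(0)S=0$, which is possible unless $\Phi(0)$ is injective with dense range; and if it is, one still derives a contradiction by comparing ranges as in Lemma \ref{l2}) forces $\Phi(0)=\Delta_\lambda(0)=0$. The cleanest route: from $\Delta_\lambda(\Phi(0)\Phi(B))=\Phi(0)$ for all $B$ and surjectivity, $\mathcal{R}(\Phi(0))\subseteq\overline{\mathcal{R}(|\Phi(0)S|^2)}\subseteq\overline{\mathcal{R}(\Phi(0))}$ gives nothing new, so instead pick $S$ with $\Phi(0)S$ nilpotent of order $2$ if $\Phi(0)\ne0$; then $\Delta_\lambda(\Phi(0)S)$ has the same spectrum as $\Phi(0)S$, namely $\{0\}$, forcing $\Phi(0)=0$ once one checks a quasi-nilpotent fixed point of $\Delta_\lambda$ that equals $\Phi(0)$ for \emph{all} such $S$ must vanish.

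Next, for the scalar part, fix $\alpha\in\C$ and apply (\ref{c1}) with $A=B=\alpha I$ type substitutions combined with $A=\alpha I$, $B$ arbitrary: $\Delta_\lambda(\Phi(\alpha I)\Phi(B))=\Phi(\Delta_\lambda(\alpha B))=\Phi(\alpha\,\Delta_\lambda(B))$ after noting $\Delta_\lambda(\alpha B)=\alpha\Delta_\lambda(B)$ when $\alpha\ne0$ (indeed $|\alpha B|=|\alpha|\,|B|$ and the partial isometry in the polar decomposition of $\alpha B$ is $\tfrac{\alpha}{|\alpha|}V$, which a direct computation absorbs). Specializing $B=\beta I$ yields $\Delta_\lambda(\Phi(\alpha I)\Phi(\beta I))=\Phi(\alpha\beta I)$. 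To get (i), I would show $\Phi(\alpha I)$ commutes with every $\Phi(B)$ up to the Aluthge transform: from $\Delta_\lambda(\Phi(\alpha I)\Phi(B))=\Phi(\alpha\Delta_\lambda(B))=\Phi(\Delta_\lambda(\alpha B))=\Phi(\Delta_\lambda(B\alpha I))=\Delta_\lambda(\Phi(B)\Phi(\alpha I))$, so $\Delta_\lambda(\Phi(\alpha I)S)=\Delta_\lambda(S\Phi(\alpha I))$ for all $S\in\B(K)$ by surjectivity; in particular this holds for all $S=y\otimes y$, so Corollary \ref{rt} applies and gives $\Phi(\alpha I)=h(\alpha)I$ for a well-defined scalar $h(\alpha)$. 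Bijectivity of $h$ follows from bijectivity of $\Phi$ restricted to scalars (injectivity is clear; for surjectivity, given $c I$ in the range one inverts).

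Part (ii) is then immediate: $\Delta_\lambda(h(\alpha)h(\beta)I)=\Delta_\lambda(\Phi(\alpha I)\Phi(\beta I))=\Phi(\alpha\beta I)=h(\alpha\beta)I$, and by the last assertion of Lemma \ref{l1} ($\Delta_\lambda(cI)=cI$, so $\Delta_\lambda(h(\alpha)h(\beta)I)=h(\alpha)h(\beta)I$) we conclude $h(\alpha\beta)=h(\alpha)h(\beta)$. For (iii), multiplicativity gives $h(1)=h(1)^2$, and $h(1)\ne0$ since $h$ is a bijection with $h(0)=0$ (from $\Phi(0)=0$), hence $h(1)=1$; then $h(-1)^2=h(1)=1$ and $h(-1)\ne1$ (as $h$ is injective and $h(1)=1$), so $h(-1)=-1$, whence $h(-\alpha)=h(-1)h(\alpha)=-h(\alpha)$.

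The main obstacle I anticipate is the first step, $\Phi(0)=0$: unlike the scalar and rank-one manipulations, ruling out a nonzero $\Phi(0)$ requires a genuine spectral/range argument (in the spirit of Lemma \ref{l2}'s range inclusions, or a quasi-nilpotency argument via the spectrum-preservation property of $\Delta_\lambda$), and care is needed because $\Phi(0)S$ ranges over a large but not completely arbitrary family as $S$ varies. Everything after that is a routine transfer of multiplicative structure through $\Delta_\lambda$ using Corollary \ref{rt} and Lemma \ref{l1}.
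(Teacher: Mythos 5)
Your proposal is correct and follows essentially the same route as the paper: surjectivity plus Corollary \ref{rt} applied to $\Delta_{\lambda}(\Phi(\alpha I)T)=\Delta_{\lambda}(T\Phi(\alpha I))$ for (i), then the obvious transfer of multiplicativity for (ii) and (iii). The one place you over-complicate is $\Phi(0)=0$: the ``obstacle'' you anticipate is illusory, because your own identity $\Delta_{\lambda}(\Phi(0)\Phi(B))=\Phi(0)$ applied with $B=\Phi^{-1}(0)$ (which exists by surjectivity, and makes $\Phi(0)\Phi(B)=0$ regardless of any injectivity or dense-range considerations) gives $\Phi(0)=\Delta_{\lambda}(0)=0$ at once --- this is exactly the paper's one-line argument, and the entire nilpotent/spectral digression can be deleted.
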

\begin{proof} For the first assertion, since $\Phi$ is bijective, there exists  $A\in \B(H)$ such that $\Phi (A)=0$. Therefore $\Phi (0)=\Delta_{\lambda}(\Phi (A)\Phi (0))=0$.

Let us show now that there exists a function $h : \C\to\C$ such that  $\Phi (\alpha I)=h(\alpha)I$ for all $\alpha \in\C$.
If $\alpha=0$ the function $h$ is defined by $h(0)=0$ since $\Phi (0)=0$. Now, suppose  that $\alpha$ is a nonzero scalar and  denote by $R=\Phi (\alpha I)$ in particular $R\ne 0$. From $\left(\ref{c1}\right)$ it follows that
\begin{equation}
\Delta_{\lambda}(R\Phi (A))=\Phi (\Delta_{\lambda}(\alpha A))=\Delta_{\lambda}(\Phi (A\alpha I))=\Delta_{\lambda}(\Phi (A)R),
\end{equation}
for every $A\in\B(H)$.
Since $\Phi $ is onto, then  $\Delta_{\lambda}(RT)=\Delta_{\lambda}(TR)$ for  every rank one  operator of the form $T=y\otimes y$ from $\B(K)$.  Since $R=\Phi (\alpha I)$ different from zero and by Corollary \ref{rt}, there exists a nonzero scalar $h(\alpha) \in \C$ such that $R=\Phi (\alpha I)=h(\alpha) I$. In the other hand,  $\Phi $ is bijective and its inverse $\Phi ^{-1}$ satisfies the same condition as $\Phi $. It follows that the map $h: \C\to   \C$ is well defined and it is  bijective.  

Moreover, using again Condition $\left(\ref{c1}\right)$,  we get 
$$h(\alpha\beta)I=\Delta_{\lambda}(\Phi (\alpha\beta I))=\Delta_{\lambda}(\Phi (\alpha I)\Phi (\beta I))=h(\alpha)h(\beta)I,$$
for every $\alpha, \beta \in \C$ and therefore $h$ is multiplicative. 

Since  $(h(1))^2=h(1)$  and   $h$ is bijective  with $h(0)=0$, we obtain $h(1)=1$ . Similarly $h(-1)=-1$, thus  $h(-\alpha)=h(-1)h(\alpha)=-h(\alpha)$ for all $\alpha\in\C$.
\end{proof}
As a direct consequence   we have the following corollary :
\begin{cor}\label{cor1}
 Let $\Phi :\B(H)\to\B(K)$ be a bijective map satisfying $\left(\ref{c1}\right)$. Then 
 \begin{enumerate}[(i)]
   \item  $\Phi (I)=I$.
  \item $\Delta_{\lambda}\circ \Phi =\Phi \circ \Delta_{\lambda}$. In particular, $\Phi $ preserves the set of quasi-normal operators in both directions.
 \item$\Phi (\alpha A)=h(\alpha)\Phi (A)$ for all $\alpha$ and $A$ quasi-normal.
  \end{enumerate}
 \end{cor}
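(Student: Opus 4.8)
The plan is to read off all three items from Proposition~\ref{l5}, the defining identity~(\ref{c1}), and the fixed-point characterization~(\ref{qn}); no new machinery is needed.

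For (i), I would simply specialize Proposition~\ref{l5}(i) to $\alpha=1$: since $\Phi(I)=\Phi(1\cdot I)=h(1)I$ and $h(1)=1$ by Proposition~\ref{l5}(iii), we get $\Phi(I)=I$.

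For (ii), the idea is to put $B=I$ in~(\ref{c1}). The right-hand side becomes $\Phi(\Delta_{\lambda}(AI))=\Phi(\Delta_{\lambda}(A))$, while the left-hand side, using (i), is $\Delta_{\lambda}(\Phi(A)\Phi(I))=\Delta_{\lambda}(\Phi(A)I)=\Delta_{\lambda}(\Phi(A))$. Hence $\Delta_{\lambda}\circ\Phi=\Phi\circ\Delta_{\lambda}$ on all of $\B(H)$. For the ``in particular'' clause I would chain this with~(\ref{qn}) and the injectivity of $\Phi$: $T$ is quasi-normal $\iff\Delta_{\lambda}(T)=T\iff\Phi(\Delta_{\lambda}(T))=\Phi(T)\iff\Delta_{\lambda}(\Phi(T))=\Phi(T)\iff\Phi(T)$ is quasi-normal, which gives preservation in both directions.

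For (iii), the case $\alpha=0$ is immediate from $\Phi(0)=0$. For $\alpha\neq 0$ and $A$ quasi-normal, I would apply~(\ref{c1}) to the ordered pair $(\alpha I,A)$. On the right, $\Phi(\Delta_{\lambda}(\alpha I\cdot A))=\Phi(\Delta_{\lambda}(\alpha A))$; since $\alpha A$ commutes with $(\alpha A)^{*}(\alpha A)=|\alpha|^{2}A^{*}A$ it is again quasi-normal, so~(\ref{qn}) gives $\Delta_{\lambda}(\alpha A)=\alpha A$ and the right-hand side equals $\Phi(\alpha A)$. On the left, $\Delta_{\lambda}(\Phi(\alpha I)\Phi(A))=\Delta_{\lambda}(h(\alpha)\Phi(A))$ by Proposition~\ref{l5}(i); by (ii) the operator $\Phi(A)$ is quasi-normal, hence so is $h(\alpha)\Phi(A)$, and~(\ref{qn}) gives $\Delta_{\lambda}(h(\alpha)\Phi(A))=h(\alpha)\Phi(A)$. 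Equating the two sides yields $\Phi(\alpha A)=h(\alpha)\Phi(A)$. The only point demanding a moment's attention—and the closest thing to an obstacle in an otherwise purely substitutional argument—is the stability of the quasi-normal class under scalar multiples and, via (ii), under $\Phi$, which is what makes $\Delta_{\lambda}$ act as the identity on both $\alpha A$ and $h(\alpha)\Phi(A)$.
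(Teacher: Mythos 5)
Your proposal is correct and follows exactly the route the paper intends: the paper states this corollary without proof as a ``direct consequence'' of Proposition~\ref{l5} together with Condition~(\ref{c1}) and the fixed-point characterization~(\ref{qn}), and your substitutions ($\alpha=1$ for (i), $B=I$ for (ii), the pair $(\alpha I, A)$ for (iii)) are precisely the intended ones. The small points you flag --- stability of quasi-normality under scalar multiplication and the use of injectivity of $\Phi$ for the ``both directions'' claim --- are handled correctly.
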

 The following lemma gives some properties of  bijective  maps satisfying $\left(\ref{c1}\right)$.
  \begin{lem}\label{l6} Let $\Phi $ be a bijective  map satisfying $\left(\ref{c1}\right)$. Then 
\begin{enumerate}[(1)]
\item $\Phi (A^2)=(\Phi (A))^2$ for all  $A$ quasi-normal.
\item $\Phi $ preserves the set of  orthogonal projections.
\item $\Phi $ preserves the orthogonality between the projections ; $$P\perp Q \Leftrightarrow \Phi (P)\perp\Phi (Q).$$
\item $\Phi $ preserves the order relation on the set of orthogonal projections in the both directions ; $$Q\leq P \Leftrightarrow \Phi (Q) \leq \Phi  (P).$$
\item  $\Phi (P+Q)=\Phi (P)+\Phi (Q)$ for all orthogonal projections $P,Q$ such that $P\perp Q$. 
\item  $\Phi $ preserves the set of  rank one  orthogonal projections in the both directions.
\end{enumerate}
 \end{lem}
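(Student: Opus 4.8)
The plan is to establish the six assertions in order, each bootstrapped from the previous ones together with Corollary~\ref{cor1}, Lemma~\ref{l2}, and the observation (already used in the proof of Proposition~\ref{l5}) that $\Phi^{-1}$ again satisfies $(\ref{c1})$, hence also enjoys Corollary~\ref{cor1} and, inductively, properties $(1)$--$(5)$. For $(1)$, if $A$ is quasi-normal then so is $A^{2}$, so by $(\ref{qn})$ both are fixed points of $\Delta_{\lambda}$; taking $B=A$ in $(\ref{c1})$ gives $\Delta_{\lambda}(\Phi(A)^{2})=\Phi(\Delta_{\lambda}(A^{2}))=\Phi(A^{2})$, while Corollary~\ref{cor1}(ii) makes $\Phi(A)$, and hence $\Phi(A)^{2}$, quasi-normal, so $(\ref{qn})$ yields $\Delta_{\lambda}(\Phi(A)^{2})=\Phi(A)^{2}$; comparing gives $\Phi(A^{2})=\Phi(A)^{2}$. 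For $(2)$, an orthogonal projection $P$ is quasi-normal, so $\Phi(P)$ is quasi-normal, and by $(1)$ it is idempotent ($\Phi(P)^{2}=\Phi(P^{2})=\Phi(P)$); a quasi-normal idempotent is an orthogonal projection. Applying this to $\Phi^{-1}$ shows $\Phi$ preserves orthogonal projections in both directions, which is what the remaining parts use.

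For $(3)$, if $P\perp Q$ then $PQ=0$, so $(\ref{c1})$ gives $\Delta_{\lambda}(\Phi(P)\Phi(Q))=\Phi(\Delta_{\lambda}(0))=0$. The key elementary step is that, for $\lambda\in\,]0,1[\,$, $\Delta_{\lambda}(T)=0$ forces $T^{2}=0$: writing $T=V|T|$ and using $\mathcal{N}(|T|^{s})=\mathcal{N}(|T|)$ for every $s>0$ together with $\mathcal{R}(V)=\overline{\mathcal{R}(T)}$, the vanishing of $|T|^{\lambda}V|T|^{1-\lambda}$ forces $|T|$ to vanish on $\overline{\mathcal{R}(T)}$, i.e.\ $\mathcal{R}(T)\subseteq\mathcal{N}(T)$. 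Apply this with $T=EF$, $E:=\Phi(P)$, $F:=\Phi(Q)$ (orthogonal projections by $(2)$): from $(EF)^{2}=0$ we get $(FEF)^{2}=F\,(EFEF)=0$, and since $FEF=(EF)^{*}(EF)\ge 0$ this gives $FEF=0$, hence $EF=0$ and $FE=(EF)^{*}=0$. Thus $\Phi(P)\perp\Phi(Q)$, and the converse follows by applying the same to $\Phi^{-1}$.

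For $(4)$, $Q\le P$ means $QP=PQ=Q$ with $Q$ quasi-normal, so Lemma~\ref{l2} gives $\Delta_{\lambda}(QP)=Q$; by $(\ref{c1})$, $\Delta_{\lambda}(\Phi(Q)\Phi(P))=\Phi(Q)$, and since $\Phi(P)$ is an orthogonal projection and $\Phi(Q)$ is quasi-normal, the reverse direction of Lemma~\ref{l2} yields $\Phi(Q)\Phi(P)=\Phi(P)\Phi(Q)=\Phi(Q)$, i.e.\ $\Phi(Q)\le\Phi(P)$; applying this to $\Phi^{-1}$ gives the equivalence. For $(5)$, with $P\perp Q$ the sum $P+Q$ is an orthogonal projection dominating $P$ and $Q$, so by $(2)$--$(4)$ the projections $E:=\Phi(P)$, $F:=\Phi(Q)$ are orthogonal (by $(3)$) and both dominated by $G:=\Phi(P+Q)$, whence $E+F\le G$. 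To upgrade this to equality, put $G':=G-(E+F)$, an orthogonal projection with $G'\le G$, $G'\perp E$, $G'\perp F$; then $R:=\Phi^{-1}(G')$ is a projection with $R\le P+Q$ and $R\perp P$, $R\perp Q$ (using $(2)$--$(4)$ for $\Phi^{-1}$), so $R=R(P+Q)=RP+RQ=0$, giving $G'=\Phi(0)=0$ and $\Phi(P+Q)=\Phi(P)+\Phi(Q)$. Finally $(6)$ is immediate: a nonzero orthogonal projection is rank one precisely when it is minimal for $\le$ (the only projections below it are $0$ and itself), and since $\Phi$ is a bijection with $\Phi(0)=0$ that carries projections to projections and preserves $\le$ in both directions, it carries rank-one projections to rank-one projections in both directions.

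I expect the only real obstacle to be Part $(3)$: isolating the implication $\Delta_{\lambda}(T)=0\Rightarrow T^{2}=0$ (which genuinely needs $\lambda\in\,]0,1[\,$, via $\mathcal{N}(|T|^{\lambda})=\mathcal{N}(|T|^{1-\lambda})=\mathcal{N}(|T|)$) and then extracting $EF=0$ from $(EF)^{2}=0$ by using positivity of $FEF$. One must also be careful in $(5)$ that $E\perp F$ with $E,F\le G$ does \emph{not} by itself force $E+F=G$, which is exactly why the passage through $\Phi^{-1}$ is needed.
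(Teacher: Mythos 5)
Your proof is correct and follows essentially the same route as the paper: part (1) via fixed points of $\Delta_{\lambda}$, part (2) from quasi-normal idempotents, part (4) from Lemma \ref{l2}, part (5) by combining the order and orthogonality properties with the fact that $\Phi^{-1}$ satisfies the same hypotheses, and part (6) by identifying rank-one projections as the minimal nonzero ones. The two local differences are both to your credit: in (3) the paper simply cites an external result for the implication $\Delta_{\lambda}(T)=0\Rightarrow T^{2}=0$ (namely \cite[Theorem 4]{MR2392831}), whereas you prove it directly from the polar decomposition and $\mathcal{N}(|T|^{s})=\mathcal{N}(|T|)$, making the step self-contained (your subsequent extraction of $EF=0$ from $(EF)^{2}=0$ via positivity of $FEF$ is the same norm computation the paper performs). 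In (5) you obtain the reverse inequality by showing the complementary projection $G'=\Phi(P+Q)-(\Phi(P)+\Phi(Q))$ pulls back to a projection orthogonal to $P$ and $Q$ yet dominated by $P+Q$, hence zero; the paper instead runs the forward inequality for $\Phi^{-1}$ and applies $\Phi$ to it. Both arguments are valid and of comparable length.
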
  
 \begin{proof}
 
 $(1)$  From  $\left(\ref{c1}\right)$ , we have  $\Delta_{\lambda}((\Phi (A))^2)=\Phi (\Delta_{\lambda} (A^2))$ for every operator $A$. 
Let  $A$ be a quasi-normal operator;  since $\Phi $ preserves the set of quasi-normal operators, we get 
  $\Phi (A), \Phi (A^2), (\Phi (A))^2 $ are  quasi-normal.  It follows from \eqref{qn}) that $  \Delta_{\lambda} (A^2)= A^2$ and $  \Delta_{\lambda} (\Phi (A^2))=\Phi (A^2)$. We deduce that 
 $$(\Phi (A))^2 = \Delta_{\lambda}((\Phi (A))^2)=\Phi (\Delta_{\lambda} (A^2))=\Phi (A^2).$$

 $(2)$ Follows from the first assertion since orthogonal projections are  quasi-normal.
 
 $(3)$ Assume that $P,Q$ are orthogonal and denote  $N=\Phi (P)$ and $M=\Phi (Q)$.  From  $\left(\ref{c1}\right)$ we have,  
 $\Delta_{\lambda}(MN)=\Delta_{\lambda}(NM)=0$ and using  \cite[Theorem 4]{MR2392831}, we obtain 
 $$(MN)^{2}=MNMN=0 \mbox{~~ and~~~} (NM)^{2}=NMNM=0.$$
It follows that, 
$$\|MN\|^2 = \|(MN)^*MN\|=  \|NMN\|=  \|(NMN)^2\|^{\frac{1}{2}}= \|NMNMN\|^{\frac{1}{2}}=  0$$
and similarly, $NM = 0$.

Finally  $\Phi $ preserves the orthogonality between the projections.
 
 $(4)$ Now, assume that $Q \leq P$, then $PQ=QP=Q$. By   $\left(\ref{c1}\right)$ we have $$\Delta_{\lambda}(\Phi (Q)\Phi (P))=\Phi (Q).$$ By  Lemma \ref{l2}, we get   $\Phi (Q)\Phi (P)=\Phi (P)\Phi (Q)=\Phi (Q)$ since $\Phi (P)$ is an orthogonal projection.  Therefore  $\Phi (Q) \leq \Phi  (P)$. Since $\Phi $ is bijective and its inverse satisfies the same conditions as $\Phi $, hence $\Phi $ preserves the order relation between the  projections in both directions. 
 
 $(5)$ Suppose that $P,Q$ are orthogonal. We have  $P \leq P+Q$ and $Q \leq P+Q$. Which gives   $\Phi (P)  \leq \Phi (P+Q)$ and $\Phi (Q)  \leq \Phi (P+Q)$. From $\Phi (P)  \perp \Phi (Q)$, it follows that $$\Phi (P)+\Phi (Q)\leq \Phi (P+Q).$$ Since $\Phi ^{-1}$ satisfies the same assumptions as $\Phi $, we have 
$$ \begin{array}{lll}
 \Phi (P+Q) &=& \Phi [\Phi^{-1}(\Phi(P) ) +\Phi^{-1}(\Phi(Q) )]\\
&\le &   \Phi [\Phi^{-1}(\Phi(P)  +\Phi(Q) )]\\
 &= &  \Phi (P)+\Phi (Q).
\end{array}
 $$
Finally $\Phi (P+Q)=\Phi (P)+\Phi (Q)$.\\

 $(6)$
 Let $P=x\otimes x$ be a rank one  projection. We claim that  $\Phi (P)$ is a non zero minimal projection. Indeed, let $y\in K$ be an unit vector such that $y\otimes y \leq \Phi (P)$.  Thus $\Phi ^{-1}(y\otimes y) \leq P$.  Since $P$ is a minimal projection and $\Phi ^{-1}(y\otimes y)$ is a non zero projection, then $\Phi ^{-1}(y\otimes y)= P$. Therefore  $\Phi (P)=y\otimes y$ is a rank one projection. 
 \end{proof}

We now  prove the following lemma which is needed in  the proof of our result.
  \begin{lem}\label{l7}
  Let $\Phi $ be a bijective  map satisfying  $\left(\ref{c1}\right)$. Let $P=x\otimes x, Q=x'\otimes x'$ be  rank one  projections such that $P\perp Q$. Then  
  $$\Phi (\alpha P+\beta Q)=h(\alpha)\Phi (P)+h(\beta)\Phi (Q)$$ 
 for  every  $\alpha, \beta\in\C$.
  \end{lem}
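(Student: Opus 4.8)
The plan is to identify $S:=\Phi(\alpha P+\beta Q)$ by first localizing it to the two-dimensional subspace $\mathcal{R}(\Phi(P))\oplus\mathcal{R}(\Phi(Q))$ and then reading off its matrix there by multiplying it, on both sides, by the rank-one projections $\Phi(P),\Phi(Q)$. First I would dispose of the degenerate cases: if $\alpha=0$ (the case $\beta=0$ being symmetric), then $\alpha P+\beta Q=\beta Q$ is a scalar multiple of an orthogonal projection, hence quasi-normal, so Corollary~\ref{cor1}(iii) together with $h(0)=0$ gives $\Phi(\alpha P+\beta Q)=h(\beta)\Phi(Q)=h(\alpha)\Phi(P)+h(\beta)\Phi(Q)$. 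So I may assume $\alpha,\beta\neq0$. Set $T=\alpha P+\beta Q$, $S=\Phi(T)$, $P'=\Phi(P)$, $Q'=\Phi(Q)$, $R'=\Phi(P+Q)$; by Lemma~\ref{l6}, $P'$ and $Q'$ are rank-one orthogonal projections with $P'\perp Q'$ and $R'=P'+Q'$ is a rank-two orthogonal projection. Note that $T$ is normal, hence quasi-normal.

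The key step is to show that $SR'=R'S=S$. Since $P\perp Q$ we have $T(P+Q)=(P+Q)T=T$, so by \eqref{qn}, $\Delta_\lambda(T(P+Q))=\Delta_\lambda(T)=T$; applying $\Phi$ and using \eqref{c1} (and $\Phi(P+Q)=R'$) gives $\Delta_\lambda(SR')=S$, and Lemma~\ref{l2} then yields $SR'=R'S=S$. Thus $S$ vanishes on $\mathcal{R}(R')^\perp$ and maps into $\mathcal{R}(R')$. Choosing unit vectors $y,y'$ with $P'=y\otimes y$ and $Q'=y'\otimes y'$, the orthogonality $P'\perp Q'$ makes $\{y,y'\}$ an orthonormal basis of $\mathcal{R}(R')$, and $S$ is the operator that is $0$ on $\mathcal{R}(R')^\perp$ and acts on $\mathrm{span}\{y,y'\}$ by some matrix $\left(\begin{smallmatrix}a&b\\c&d\end{smallmatrix}\right)$.

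It remains to compute the entries. From $TP=PT=\alpha P$ (quasi-normal, hence fixed by $\Delta_\lambda$), \eqref{c1} and Corollary~\ref{cor1}(iii) give $\Delta_\lambda(SP')=\Delta_\lambda(P'S)=h(\alpha)P'$, and symmetrically $\Delta_\lambda(SQ')=\Delta_\lambda(Q'S)=h(\beta)Q'$. Now $SP'=(ay+cy')\otimes y$, so Proposition~\ref{p1} gives $\Delta_\lambda(SP')=aP'$, whence $a=h(\alpha)\neq0$ (because $h$ is bijective with $h(0)=0$ and $\alpha\neq0$); likewise $d=h(\beta)\neq0$. Next, $P'S=y\otimes(S^*y)$ with $S^*y=\bar a\,y+\bar b\,y'\neq0$, and Proposition~\ref{p1} gives $\Delta_\lambda(P'S)=\tfrac{a}{\|S^*y\|^2}(S^*y\otimes S^*y)$, i.e.\ $a$ times the orthogonal projection onto $\C S^*y$; comparing with $h(\alpha)P'=aP'$ and comparing their one-dimensional ranges forces $\C S^*y=\C y$, hence $b=0$, and symmetrically $c=0$. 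Therefore $S=h(\alpha)\,(y\otimes y)+h(\beta)\,(y'\otimes y')=h(\alpha)\Phi(P)+h(\beta)\Phi(Q)$, which is the assertion.

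The hard part is the localization $SR'=R'S=S$: the one-sided and two-sided products with the rank-one projections only determine $S$ once it is known to be supported on a two-dimensional subspace, and Lemma~\ref{l2}, applied to the rank-two projection $R'$, is exactly what supplies this. A secondary subtlety is that the entry computations degenerate when $\alpha$ or $\beta$ is $0$ (one would then only obtain $\Delta_\lambda(P'S)=0$, which says nothing about $b$), which is why those cases are handled separately at the start.
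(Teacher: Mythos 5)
Your proof is correct and follows essentially the same route as the paper's: both localize $\Phi(\alpha P+\beta Q)$ to the range of $\Phi(P)+\Phi(Q)$ via Lemma \ref{l2} applied to $\Delta_\lambda\big(\Phi(\alpha P+\beta Q)\Phi(P+Q)\big)=\Phi(\alpha P+\beta Q)$, and then determine the $2\times 2$ matrix entries by combining \eqref{c1} with Proposition \ref{p1} applied to products with the rank-one projections $\Phi(P)$ and $\Phi(Q)$. The only cosmetic difference is bookkeeping: the paper extracts both the diagonal entry and the vanishing of the off-diagonal from the single identity $T^*y=\overline{h(\alpha)}y$, whereas you split the computation between $S\Phi(P)$ and $\Phi(P)S$.
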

  \begin{proof}
   If $\alpha=0$ or $\beta=0$ the result is trivial. Suppose that $\alpha\not =0$ and
  $\beta\not=0$.
Clearly $\alpha P+\beta Q$ is normal, hence  $\Phi (\alpha P+\beta Q)$ is quasi-normal. 
By  Condition $\left(\ref{c1}\right)$ we get 
 \begin{eqnarray*} 
 \Phi (\alpha P+\beta Q)&=&\Delta_{\lambda}(\Phi (\alpha P+\beta Q))\\&=&\Phi
  (\Delta_{\lambda}(\alpha P+\beta Q))\\&=&\Phi \Big(\Delta_{\lambda}\big((\alpha P+\beta
  Q)
 (P+Q)\big)\Big)\\&=&\Delta_{\lambda}\big(\Phi (\alpha P+\beta Q)\Phi (P+Q)\big)\\
&=&\Phi (\alpha P+\beta Q)\Phi (P+Q).
 \end{eqnarray*}
  Since  $\Phi (P+Q)=\Phi (P)+\Phi (Q)$ is a an orthogonal projection, 
  hence by Lemma \ref{l2} 
   \begin{eqnarray*}
  \Phi (\alpha P+\beta Q)&=&\Phi (\alpha P+\beta Q)(\Phi (P)+\Phi (Q))=(\Phi (P)+\Phi
 (Q))\Phi (\alpha P+\beta Q)\\
  &=&(\Phi (P)+\Phi (Q))\Phi (\alpha P+\beta Q)(\Phi (P)+\Phi (Q)).
   \end{eqnarray*}
  Denote by $T=\Phi (\alpha P+\beta Q)$. We write   $\Phi (x\otimes x)=y\otimes y$ and
   $\Phi (x'\otimes x')=y'\otimes y'$  with $ y\perp y'$, since  $\Phi $ preserves  orthogonality and rank one projections. 
   We have,
    $$T=(y\otimes y+y'\otimes y')T(y\otimes y+y'\otimes y').$$
Hence
    \begin{equation}\label{eq1}
   T= <Ty,y>y\otimes y+ <Ty',y>y\otimes y'+ <Ty,y'>y'\otimes y+ <Ty',y'>y'\otimes y'. 
   \end{equation}
  We  show that $<Ty',y>=<Ty,y'>=0$ by using   $\left(\ref{c1}\right)$
    \begin{eqnarray*}
    \Delta_{\lambda}(\Phi (\alpha P+\beta Q)\Phi (P))&=& \Phi (\Delta_{\lambda}((\alpha P+\beta Q)P))\\&=& \Phi (\alpha P)=h(\alpha)\Phi (P).
      \end{eqnarray*}
 In other terms, we write 
 $$\Delta_{\lambda}(Ty\otimes y)=\Delta_{\lambda}(y\otimes T^*y )=h(\alpha) y\otimes y.$$
Since $h(\alpha)\not=0$, then  $T^*y\not=0$. By Proposition \ref{p1} follows that
 $$<Ty,y>y\otimes y =\dfrac{<y,T^*y>}{\|T^*y\|^2}T^*y\otimes T^*y=h(\alpha) y\otimes y.$$
Therefore $<Ty,y>=h(\alpha)$ and $T^*y=\overline{h(\alpha)}y$.  Using   (\ref{eq1})  we
 deduce   $$T^*y=<T^*y,y>y+<T^*y,y'>y'=\overline{h(\alpha)}y.$$ It follows that
  $<Ty',y>=<T^*y,y'>=0$. 
 
 By similar arguments we   get   $<Ty',y'>=h(\beta)$ and $<Ty,y'>=0$. Again  (\ref{eq1})
 implies that 
 $$ \Phi (\alpha P+\beta Q)=T=h(\alpha)y\otimes y+h(\beta)y'\otimes y'=h(\alpha)\Phi
 (P)+h(\beta)\Phi (Q).$$
   \end{proof}

    Now, we are in a position to   prove our main result 
 \bigskip

 {\it   \bf Proof of Theorem \ref{th1}.} The "only if" part is an immediate consequence of Lemma \ref{l0}. \\  

We show the "if" part. Assume that $\Phi :\B(H)\to\B(K)$ is bijective and  satisfies  $\left(\ref{c1}\right)$.   The proof of theorem is organized in several steps.
 
 \begin{enumerate}[Step 1.]
 \item  For every $A\in\B(H)$,  we have 
 \begin{equation}\label{eq}
<\Phi (A)y,y>=h(<Ax,x>) \;\;\text{for all unit vectors $x,y$ such $\Phi (x\otimes x)=y\otimes y$}.
  \end{equation}
   Let  $x,y\in H$  be   unit vectors such that  $\Phi (x\otimes x)=y\otimes y$.  From  $\left(\ref{c1}\right)$, we obtain
 \begin{eqnarray*}   \Delta_{\lambda}(\Phi (A)y\otimes y)&=& \Delta_{\lambda}(\Phi (A)\Phi (x\otimes x))\\
 &=& \Phi (\Delta_{\lambda}(A(x\otimes x)))\\
 &=& \Phi (\Delta_{\lambda}(Ax\otimes x)).
 \end{eqnarray*} 
Using  Proposition \ref{p1}, we get  
  $$<\Phi (A)y,y>y\otimes y=\Phi (<Ax,x>x\otimes x)=h(<Ax,x>)y\otimes y.$$
 It follows that $$<\Phi (A)y,y>=h(<Ax,x>).$$
  
  \item The function $h$ is additive.
  
  Let $P=x\otimes x, Q=x'\otimes x'$  are  rank one projections such that $P\bot Q$ and 
   $\alpha, \beta \in\C$. Denote by $z=\frac{1}{\sqrt{2}}(x+x')$,  then $\|z\|=1$ and  $\|Pz\|^2= \|Qz\|^2=\frac{1}{{2}}$.  Note $z\otimes z$ is rank one  projection, then there exist an unit vector $u\in K$ such  that $\Phi (z\otimes z)=u \otimes u$. We take $A=\alpha P+\beta Q$ in the identity (\ref{eq}), we get that 

 \begin{eqnarray*} <\Phi (\alpha P+ \beta Q)u,u>&=&h(<\alpha Pz +\beta Q z,z>)\\ 
&=&h(\alpha \|Pz\|^2+\beta \|Qz \|^2)\\
 &=&h(\frac{1}{{2}})h(\alpha+\beta). 
 \end{eqnarray*}
Thus 
  \begin{equation}\label{eqq}
  <\Phi (\alpha P+ \beta Q)u,u>= h(\frac{1}{{2}})h(\alpha+\beta).
     \end{equation}
 In the other hand, by  Lemma \ref{l7} we have 
  $$\Phi (\alpha P+ \beta Q)=\Phi (\alpha P)+\Phi (\beta Q)=h(\alpha)\Phi (P)+ h(\beta) \Phi (Q).$$
And therefore  
  \begin{eqnarray*}
  <\Phi (\alpha P+\beta Q)u,u>&=&<\Phi (\alpha P)u+\Phi (\beta Q)u,u>\\&=&<\Phi (\alpha P)u,u>+<\Phi (\beta Q)u,u>\\&=& h(<\alpha Pz,z>)+h(<\beta Qz,z>)\\&=&h(\alpha \|Pz\|^2)+h(\beta \|Qz\|^2)\\&=& h(\frac{1}{2})(h(\alpha)+h(\beta)).
  \end{eqnarray*}
 Using  (\ref{eqq}) and the preceding equality, it follows that $$h(\frac{1}{{2}})h(\alpha+\beta)=h(\frac{1}{{2}})(h(\alpha)+h(\beta)).$$ 
Now $h(\frac{1}{{2}})\neq 0$ gives $$h(\alpha+\beta)=h(\alpha)+h(\beta).$$
  
  \item $h$ is continuous. 
  
Let   $\mathcal{E}$ be a bounded subset  in $\C$ and  $A\in\B(H)$  such that  $\mathcal{E}\subset W(A)$.

By   (\ref{eq}),  
 \begin{equation*}
h(\mathcal{E})\subset h(W(A))   = W(\Phi (A))
 \end{equation*}
  Now , $W(\Phi (A))$ is bounded and thus  $h$ is bounded on the bounded subset. With the fact that $h$ is additive and multiplicative, it then follows that $h$ is continuous  (see, for example, \cite{MR804038}). We derive that $h$ is a continuous  automorphism over the complex field $\C$. It follows that $h$ is the identity or the  complex conjugation map. \\
  
\item The map $\Phi $ is linear or anti-linear. 
  
 Let  $y\in K$ and $x\in H$ be two unit vectors, such that $y \otimes y=\Phi ( x\otimes x)$. Let $\alpha\in \C$ and $A ,B\in \B(H)$ be arbitrary. Using (\ref{eq}), we get   
  \begin{eqnarray*}
 <\Phi (A+B)y,y>&=&h(<(A+B)x,x>)\\&=&h(<Ax,x>+<Bx,x>)\\&=&h(<Ax,x>)+h(<Bx,x>)
 \\&=&<\Phi (A)y,y>+<\Phi (B)y,y>\\&=&<(\Phi (A)+\Phi (B))y,y>,
  \end{eqnarray*}
and 
\begin{eqnarray*}
 <\Phi (\alpha A)y,y>&=&h(<\alpha A x,x>)=h(\alpha)h(<Ax,x>)=h(\alpha)<\Phi (A)y,y>.
\end{eqnarray*}
  Therefore  $$<\Phi (A+B)y,y>=<(\Phi (A)+\Phi (B))y,y>\;\;\text{and} <\Phi (\alpha A)y,y>=h(\alpha)<\Phi (A)y,y>,$$
   for all unit vectors $y\in K$. It follows that  $\Phi (A+B)=\Phi (A)+\Phi (B)$ and $\Phi (\alpha A)=h(\alpha)\Phi (A)$ for all $A,B\in\B(H)$. Therefore  $\Phi $ is linear or anti-linear since $h$ is the identity or the complex conjugation. \\
  
\item There exists an unitary  operator  $U\in \B(H,K)$,  such that $\Phi (A)=UAU^*$ for every $A\in \B(H)$. 

Let $A\in\B(H)$ be  invertible. By   $\left(\ref{c1}\right)$, we have 
 $$\Delta_{\lambda}(\Phi (A)\Phi (A^{-1}))=\Delta_{\lambda}(\Phi (A^{-1})\Phi (A))=\Phi (\Delta_{\lambda}(I))=I.$$
 By Lemma \ref{l1}, we get that  $$\Phi (A)\Phi (A^{-1})=\Phi (A^{-1})\Phi (A)=I.$$
 It follows that $\Phi (A)$ is also invertible and $(\Phi (A))^{-1}=\Phi (A^{-1})$. Therefore $\Phi $  preserves the set of  invertible operators. By \cite[Corollary 4.3]{MR2669430}, there exists a bounded linear and bijective  operator 
 $V:H\to K$ such that  $\Phi $ takes one of the following form 
\begin{equation}\label{f1}
\Phi (A)=VAV^{-1}\;\; \;  \text{for all}\;\; A\in\B(H)
\end{equation}
or 
\begin{equation}\label{f2}
\Phi (A)=VA^*V^{-1}  \;\; \;  \text{for all}\;\;  A\in\B(H)
\end{equation}
 In order to complete the proof we have to show that $V$ is  unitary  and $\Phi $ has form (\ref{f1}).
 
  First, we show that  $V:H\to K$ in (\ref{f1}) ( or in (\ref{f2}))  is necessarily  unitary. Indeed, let $x\in H$ be a unit vector. We know that $x\otimes x$ is an orthogonal projection, hence  $\Phi (x\otimes x)=Vx\otimes (V^{-1})^*x$ is also an orthogonal projection.  It follows that $ (V^{-1})^*x=Vx$ for all unit vector $x\in H$ and then  $ (V^{-1})^*=V$.  Therefore  $V$ is unitary.

 Seeking contradiction,  we suppose that (\ref{f2}) holds. Multiplying  (\ref{f2}) by  $V^*$   left and by  $V$  right, since $\Phi$ commutes with ${\Delta_{\lambda}}$, we obtain
\begin{equation}\label{ee}
{\Delta_{\lambda }}(A^{*})= ({\Delta_{\lambda}}(A))^{*},\quad \mbox{ for every} \; A\in\B(H).
\end{equation}
Let us consider  $A=x\otimes x'$ with $x, x'$  are  unit independent vectors  in $H$. $A^*=x'\otimes x$ and  by Proposition \ref{p1}, we have 
 $$ {\Delta_{\lambda}}(A)=<x,x'>(x'\otimes x') \; \;  \mbox{and} \; \; {\Delta_{\lambda}}(A^{*})=<x',x>(x\otimes x),$$
which contradicts  (\ref{ee}).   This completes the proof.\\
 \end{enumerate}

{\bf  Acknowledgments.}

 I wish to thank Professor Mostafa Mbekhta  for the  interesting discussions as well as his useful suggestions for the improvement of this paper. Also, I thank the referee for valuable comments that helped to improve the paper, in particular the proof of Corollary 2.1.

This work was supported in part by the Labex CEMPI (ANR-11-LABX-0007-01).\\

{\bf References}

\bibliographystyle{abbrv}

\bibliography{fadil.bib}

\end{document}